\providecommand{\U}[1]{\protect\rule{.1in}{.1in}}
\newtheorem{theorem}{Theorem}
\newtheorem{acknowledgement}[theorem]{Acknowledgement}
\newtheorem{corollary}[theorem]{Corollary}
\newtheorem{example}[theorem]{Example}
\newtheorem{remark}[theorem]{Remark}
\newenvironment{proof}[1][Proof]{\noindent\textbf{#1.} }{\ \rule{0.5em}{0.5em}}
\begin{document}

\title{Fisher information of orthogonal polynomials I}
\author{Diego Dominici \thanks{e-mail: dominici@math.tu-berlin.de}\\Technische Universit\"{a}t Berlin\\Sekretariat MA 4-5\\Stra\ss e des 17. Juni 136 \\D-10623 Berlin \\Germany\\Permanent address: \\Department of Mathematics State University of New York at New Paltz \\1 Hawk Dr. \\New Paltz, NY 12561-2443 \\USA}
\date{\textit{Dedicated to Jes\'{u}s Dehesa on the occasion of his 60th birthday} }
\maketitle

MSC-class: 33C45, 94A17, 33C05

\begin{abstract}
Following the lead of J. Dehesa and his collaborators, we compute the Fisher
information of the Meixner-Pollaczek, Meixner, Krawtchouk and Charlier polynomials.

\end{abstract}

\section{Introduction}

The Fisher information $I_{\theta}\left(  \mu\right)  $ of a random variable
$X$ with distribution $\mu(x;\theta),$ where $\theta$ is a continuous
parameter, is defined by \cite{MR0346955}%
\begin{equation}
I_{\theta}\left(  \mu\right)  =\mathbb{E}\left\{  \left[  \frac{\partial
}{\partial\theta}\ln\left(  \mu\right)  \right]  ^{2}\right\}  .
\label{fisher}%
\end{equation}
It is named after R. A. Fisher (17 February 1890 -- 29 July 1962), who
invented the concept of maximum likelihood estimator and discovered many of
its properties. Among other results, he proved that if $\widehat{\theta}$ is
the maximum likelihood estimator of $\theta,$ we have the following asymptotic
normality of $\widehat{\theta}$
\[
\sqrt{n}\left(  \widehat{\theta}-\theta\right)  \overset{\mathcal{D}%
}{\rightarrow}\mathcal{N}\left(  0,\frac{1}{I_{\theta}\left(  \mu\right)
}\right)  ,
\]
where $\mathcal{N}(\cdot,\cdot)$ denotes the normal distribution and $n$ is
the sample size. Over the years,the concept of Fisher information has found
many applications in physics \cite{MR2069674}, biology \cite{MR2128009},
engineering \cite{MR1889650}, etc.

\begin{example}
The negative binomial distribution.

Let $0\leq p\leq1,$ $r>0$ and
\[
\mu(k;p,r)=\binom{r+k-1}{k}p^{k}(1-p)^{r},\quad k=0,1,\ldots.
\]
Then, we have%
\begin{equation}
I_{p}\left(  \mu\right)  =%
{\displaystyle\sum\limits_{k=0}^{\infty}}
\left[  \frac{r+k-r\left(  1-p\right)  }{p}\right]  ^{2}\binom{r+k-1}{k}%
p^{k}(1-p)^{r}=\frac{r}{p\left(  1-p\right)  ^{2}}. \label{negbin}%
\end{equation}

\end{example}

\begin{example}
The binomial distribution.

Let $0\leq p\leq1,$ $n\in\mathbb{N}$ and
\[
\mu(k;p,n)=\binom{n}{k}p^{k}(1-p)^{n-k},\quad k=0,1,\ldots,n.
\]
Then, we have%
\begin{equation}
I_{p}\left(  \mu\right)  =%
{\displaystyle\sum\limits_{k=0}^{n}}
\left[  \frac{k-pn}{p\left(  1-p\right)  }\right]  ^{2}\binom{n}{k}%
p^{k}(1-p)^{n-k}=\frac{n}{p\left(  1-p\right)  }. \label{bin}%
\end{equation}

\end{example}

\begin{example}
The Poisson distribution.

Let $\lambda>0$ and
\[
\mu(k;\lambda)=\frac{\lambda^{k}}{k!}e^{-\lambda},\quad k=0,1,\ldots.
\]
Then, we have%
\begin{equation}
I_{\lambda}\left(  \mu\right)  =%
{\displaystyle\sum\limits_{k=0}^{\infty}}
\left(  \frac{k-\lambda}{\lambda}\right)  ^{2}\frac{\lambda^{k}}%
{k!}e^{-\lambda}=\frac{1}{\lambda}. \label{poisson}%
\end{equation}

\end{example}

In \cite{MR2146490}, J. S\'{a}nchez-Ruiz and J. Dehesa introduced the concept
of Fisher information of orthogonal polynomials. They considered a sequence of
real polynomials orthogonal with respect to the weight function $\rho(x)$ on
the interval $[a,b]$%
\[%
{\displaystyle\int\limits_{a}^{b}}
P_{n}(x)P_{m}(x)\rho(x)dx=h_{n}\delta_{n,m},\quad n,m=0,1,\ldots,
\]
with $\deg\left(  P_{n}\right)  =n.$ Introducing the normalized density
functions%
\begin{equation}
\rho_{n}\left(  x\right)  =\frac{\left[  P_{n}(x)\right]  ^{2}\rho\left(
x\right)  }{h_{n}}, \label{rhonjesus}%
\end{equation}
they defined the Fisher information corresponding to the densities
(\ref{rhonjesus}) by%
\begin{equation}
\mathcal{I}\left(  n\right)  =%
{\displaystyle\int\limits_{a}^{b}}
\frac{\left[  \rho_{n}^{\prime}\left(  x\right)  \right]  ^{2}}{\rho
_{n}\left(  x\right)  }dx, \label{Ijesus}%
\end{equation}
which they referred to as the Fisher information of the polynomial $P_{n}(x).$
Applying (\ref{Ijesus}) to the classical hypergeometric polynomials, they
calculated $\mathcal{I}\left(  n\right)  $ for the Jacobi, Laguerre and
Hermite polynomials.

In this work, we extend their ideas to some families of orthogonal
polynomials. We use a concept of Fisher information closer to (\ref{fisher}),
i.e., information content with respect to a parameter.

The paper is organized as follows: Section \ref{s2} contains some general
results on hypergeometric polynomials which we have been unable to find
explicitly in the literature, although they may be known. In Section \ref{S3}
we compute the Fisher information of the Meixner-Pollaczek, Meixner,
Krawtchouk and Charlier polynomials.

\section{Preliminaries\label{s2}}

Let $P_{n}(x)$ be a family of orthogonal polynomials satisfying
\begin{equation}%
{\displaystyle\sum\limits_{x=0}^{\infty}}
P_{n}(x)P_{m}(x)\rho\left(  x\right)  =h_{n}\delta_{n,m},\quad n,m=0,1,\ldots.
\label{ortho}%
\end{equation}
We define%
\begin{equation}
\rho_{n}\left(  x\right)  =\frac{\left[  P_{n}(x)\right]  ^{2}\rho\left(
x\right)  }{h_{n}},\quad n=0,1,\ldots\label{rhon}%
\end{equation}
and%
\begin{equation}
I_{\theta}\left(  P_{n}\right)  =%
{\displaystyle\sum\limits_{x=0}^{\infty}}
\left[  \frac{\partial}{\partial\theta}\rho_{n}\left(  x\right)  \right]
^{2}\frac{1}{\rho_{n}\left(  x\right)  },\quad n=0,1,\ldots. \label{In}%
\end{equation}
Note that
\begin{equation}%
{\displaystyle\sum\limits_{x=0}^{\infty}}
\rho_{n}\left(  x\right)  =1,\quad n=0,1,\ldots. \label{rhon1}%
\end{equation}

\begin{theorem}
Let $P_{n}(x)$ be a family of polynomials defined by
\[
P_{n}(x)=~_{2}F_{1}\left[  \left.
\begin{array}
[c]{c}%
-n,-x\\
c
\end{array}
\right\vert z\left(  \theta\right)  \right]  ,\quad n=0,1,\ldots,
\]
where $_{2}F_{1}\left[  \cdot\right]  $ is the hypergeometric function
\cite{MR0350075}%
\[
~_{2}F_{1}\left[  \left.
\begin{array}
[c]{c}%
a,b\\
c
\end{array}
\right\vert z\right]  =%
{\displaystyle\sum\limits_{k=0}^{\infty}}
\frac{\left(  a\right)  _{k}\left(  b\right)  _{k}}{\left(  c\right)  _{k}%
}\frac{z^{k}}{k!}%
\]
and $\left(  \cdot\right)  _{k}$ denotes the Pochhammer symbol. Then,
$P_{n}(x)$ satisfies the following:

\begin{enumerate}
\item
\begin{equation}
\left(  n+c\right)  P_{n+1}\left(  x\right)  +\left[  \left(  n-x\right)
z-2n-c\right]  P_{n}\left(  x\right)  -n\left(  z-1\right)  P_{n-1}\left(
x\right)  =0. \label{recu}%
\end{equation}

\item
\begin{equation}
\frac{\partial P_{n}}{\partial\theta}=n\frac{z^{\prime}}{z}\left[
P_{n}\left(  x\right)  -P_{n-1}\left(  x\right)  \right]  ,\quad n=0,1,\ldots.
\label{diff}%
\end{equation}

\item
\[%
{\displaystyle\sum\limits_{x=0}^{\infty}}
P_{n}(x)P_{m}(x)\rho\left(  x\right)  =h_{n}\delta_{n,m},\quad n,m=0,1,\ldots
,
\]
with%
\begin{equation}
\rho\left(  x\right)  =\frac{\left(  c\right)  _{x}}{\left(  1-z\right)
^{x}x!} \label{rho}%
\end{equation}
and%
\begin{equation}
h_{n}=\left(  1-\frac{1}{z}\right)  ^{c}\frac{\left(  1-z\right)  ^{n}%
}{\left(  c\right)  _{n}}n!,\quad n=0,1,\ldots. \label{Kn}%
\end{equation}

\end{enumerate}
\end{theorem}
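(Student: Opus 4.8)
The plan is to handle the three parts in order, obtaining (\ref{recu}) and (\ref{diff}) straight from the series definition of $P_n$ and then using (\ref{recu}) (or a second direct computation) to prove part~3.

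For part~1, equation (\ref{recu}) is simply one of Gauss's contiguous relations for $~_2F_1$. Writing $F(a\pm)=~_2F_1(a\pm1,b;c;z)$ and $F=~_2F_1(a,b;c;z)$, the relation
\[
(c-a)F(a-)+\bigl(2a-c+(b-a)z\bigr)F+a(z-1)F(a+)=0
\]
turns into (\ref{recu}) upon setting $a=-n$, $b=-x$: then $F(a-)=P_{n+1}$, $F=P_n$, $F(a+)=P_{n-1}$, $c-a=n+c$, $2a-c+(b-a)z=(n-x)z-2n-c$, and $a(z-1)=-n(z-1)$. Since $P_n$ is a terminating series one may alternatively verify (\ref{recu}) by matching the coefficient of $z^k$ on both sides using elementary Pochhammer identities. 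For part~2, $P_n$ depends on $\theta$ only through $z(\theta)$, so the chain rule reduces (\ref{diff}) to $z\,\partial P_n/\partial z=n\bigl[P_n-P_{n-1}\bigr]$. Differentiating the defining series term by term gives $z\,\partial P_n/\partial z=\sum_k k\,\frac{(-n)_k(-x)_k}{(c)_k\,k!}z^k$, whereas $n\bigl[P_n-P_{n-1}\bigr]=n\sum_k\frac{(-x)_k}{(c)_k\,k!}\bigl[(-n)_k-(-n+1)_k\bigr]z^k$; the two sides coincide because $(-n)_k-(-n+1)_k=\tfrac kn(-n)_k$, which follows from $(-n)_k=(-n)(-n+1)_{k-1}$ and $(-n+1)_k=(-n+1)_{k-1}(k-n)$. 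For $n=0$ both sides vanish.

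For part~3 I would first record $\sum_{x\ge0}\rho(x)=\bigl(1-\tfrac1z\bigr)^c$, which comes from $\sum_{l\ge0}(a)_l w^l/l!=(1-w)^{-a}$ with $a=c$, $w=1/(1-z)$; this is already $h_0$. Everything else follows from evaluating, for $0\le j\le n$,
\[
S_{n,j}:=\sum_{x\ge0}(-x)_j\,P_n(x)\,\rho(x).
\]
Using $(-x)_j/x!=(-1)^j/(x-j)!$ for $x\ge j$ and shifting $x\mapsto j+l$ gives $S_{n,j}=\frac{(-1)^j(c)_j}{(1-z)^j}\sum_{l\ge0}P_n(j+l)\frac{(c+j)_l}{(1-z)^l\,l!}$. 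Expanding $P_n(j+l)$, writing $(-(j+l))_k=(-1)^k(j+l)^{\underline{k}}$ and splitting it by the Vandermonde identity $(j+l)^{\underline{k}}=\sum_i\binom ki j^{\underline{i}}\,l^{\underline{k-i}}$, resumming over $l$ with the geometric series, and finally collapsing the $k$-sum by the Chu–Vandermonde identity $~_2F_1(-m,b;c;1)=(c-b)_m/(c)_m$, one is left with a finite sum over $i$ in which every term carries the factor $(-j)_i\,(i-j)_{n-i}$. When $j<n$ this product is zero for every $i$ (the first factor kills $i>j$, the second kills $i\le j$), so $S_{n,j}=0$; since $\{(-x)_j\}_{j=0}^{n-1}$ spans the polynomials of degree $<n$, this is exactly $\sum_x P_nP_m\rho=0$ for $m<n$, hence for all $m\ne n$. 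When $j=n$ only $i\le n$ survive, with $(i-n)_{n-i}=(-1)^{n-i}(n-i)!$, and the remaining sum telescopes by the binomial theorem to $S_{n,n}=n!\bigl(1-\tfrac1z\bigr)^{c+n}$. Since $P_n$ has leading coefficient $z^n/(c)_n$ and $x^n=(-1)^n(-x)_n+(\text{lower degree})$, we obtain $h_n=\frac{z^n}{(c)_n}(-1)^nS_{n,n}$, which simplifies to (\ref{Kn}).

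The main obstacles I anticipate are, first, the convergence and interchange-of-summation bookkeeping: all of this needs $|1-z|>1$ so that $\rho$ and all its moments are finite, which I take to be part of the standing hypotheses on $z(\theta)$; and second, keeping the triple-sum manipulation in part~3 honest, in particular applying the Chu–Vandermonde collapse to exactly the right inner $~_2F_1$. If one prefers to sidestep that computation, orthogonality can instead be derived from (\ref{recu}) by Favard's theorem — the recurrence is quasi-definite because the $P_{n-1}$-coefficient $n(z-1)/(n+c)$ is nonzero — and the weight is then pinned down using only $\sum_x\rho(x)=\bigl(1-\tfrac1z\bigr)^c$ and the $j=0$ case $\sum_x P_n\rho=0$ for $n\ge1$ (which reduces to $\sum_{k=0}^n(-1)^k\binom nk=0$); the norm then drops out of the recurrence through $h_n=\tfrac{n(1-z)}{n+c-1}\,h_{n-1}$.
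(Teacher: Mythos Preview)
Your treatment of parts 1 and 2 matches the paper's: both derive (\ref{recu}) from the same Gauss contiguous relation in the parameter $a$, and (\ref{diff}) from the derivative identity $z\,dF/dz=a\bigl[F(a+1)-F(a)\bigr]$ (you verify it term by term, the paper simply cites it). The genuine divergence is in part~3. The paper invokes a bilinear summation formula of Meixner type --- a closed form for $\sum_{k}\binom{\lambda}{k}s^{k}\,{}_{2}F_{1}[-k,b;-\lambda;z]\,{}_{2}F_{1}[-k,\beta;-\lambda;\zeta]$ --- specialises $\lambda=-c$, $b=-n$, $\beta=-m$, $s=(z-1)^{-1}$ so that the left side becomes $\sum_{x}P_{n}(x)\,{}_{2}F_{1}[-m,-x;c;\zeta]\,\rho(x)$, and then lets $\zeta\to z$; orthogonality and $h_{n}$ drop out in two lines. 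Your route --- computing the moments $S_{n,j}=\sum_{x}(-x)_{j}P_{n}(x)\rho(x)$ by expanding everything, splitting $(j+l)^{\underline{k}}$ via the Vandermonde convolution, and collapsing with Chu--Vandermonde --- is more elementary in that it needs only the binomial series and no Poisson-kernel identity, but it is correspondingly more laborious, with a triple sum to control. Both approaches are correct; the paper's buys brevity at the cost of quoting a heavier identity, yours is self-contained. One caveat on your Favard alternative: Favard's theorem supplies \emph{some} orthogonality functional from the recurrence, but identifying it with the specific weight (\ref{rho}) still requires a direct verification (your $S_{n,0}=0$ computation for $n\ge1$ together with a determinacy argument for the moment problem), so it does not fully bypass the moment calculation.
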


\begin{proof}
The three term recurrence equation (\ref{recu}) is a direct consequence of the
contiguous relation \cite[2.8 (31)]{MR698779}%
\[
\left[  2-2a-\left(  b-a\right)  z\right]  F+a(1-z)F(a+1)-(c-a)F(a-1)=0,
\]
where
\[
F=~_{2}F_{1}\left[  \left.
\begin{array}
[c]{c}%
a,b\\
c
\end{array}
\right\vert z\right]  .
\]
The differentiation formula (\ref{diff}) follows from the identity
\cite[(2.5.5)]{MR1688958}.%
\[
z\frac{dF}{dz}=a\left[  F(a+1)-F(a)\right]  .
\]

To prove (\ref{rho}),(\ref{Kn}), we use the formula \cite[2.5.2 (12)]%
{MR698779}%
\begin{align*}
&
{\displaystyle\sum\limits_{k=0}^{\infty}}
\binom{\lambda}{k}s^{k}~_{2}F_{1}\left[  \left.
\begin{array}
[c]{c}%
-k,b\\
-\lambda
\end{array}
\right\vert z\right]  ~_{2}F_{1}\left[  \left.
\begin{array}
[c]{c}%
-k,\beta\\
-\lambda
\end{array}
\right\vert \zeta\right] \\
&  =\frac{(1+s)^{\lambda+b+\beta}}{\left[  1+s(1-z)\right]  ^{b}\left[
1+s(1-\zeta)\right]  ^{\beta}}\ _{2}F_{1}\left[  \left.
\begin{array}
[c]{c}%
b,\beta\\
-\lambda
\end{array}
\right\vert \frac{-z\zeta s}{\left[  1+s(1-z)\right]  \left[  1+s(1-\zeta
)\right]  }\right]  ,
\end{align*}
with $\lambda=-c,$ $b=-n,$ $\beta=-m$ and $s=\left(  z-1\right)  ^{-1}.$
Taking into account that%
\[
\rho\left(  x\right)  =\frac{\left(  c\right)  _{x}}{\left(  1-z\right)
^{x}x!}=\binom{-c}{x}\left(  z-1\right)  ^{-x},
\]
we have%
\begin{align*}
&
{\displaystyle\sum\limits_{x=0}^{\infty}}
\ _{2}F_{1}\left[  \left.
\begin{array}
[c]{c}%
-n,-x\\
c
\end{array}
\right\vert z\right]  ~_{2}F_{1}\left[  \left.
\begin{array}
[c]{c}%
-m,-x\\
c
\end{array}
\right\vert \zeta\right]  \rho\left(  x\right) \\
&  =\left(  1-\frac{1}{z}\right)  ^{c}\ \left(  \frac{z-\zeta}{z}\right)
^{n+m}\ _{2}F_{1}\left[  \left.
\begin{array}
[c]{c}%
-n,-m\\
c
\end{array}
\right\vert \frac{(1-z)\zeta^{2}}{\left(  z-\zeta\right)  ^{2}}\right]  .
\end{align*}
Assuming that $n\leq m,$ we get%
\begin{align}
&
{\displaystyle\sum\limits_{x=0}^{\infty}}
\ _{2}F_{1}\left[  \left.
\begin{array}
[c]{c}%
-n,-x\\
c
\end{array}
\right\vert z\right]  ~_{2}F_{1}\left[  \left.
\begin{array}
[c]{c}%
-m,-x\\
c
\end{array}
\right\vert \zeta\right]  \rho\left(  x\right) \label{hyper1}\\
&  =\left(  \frac{z-1}{z}\right)  ^{c}\ \left(  \frac{1}{z}\right)  ^{n+m}%
{\displaystyle\sum\limits_{k=0}^{n}}
\frac{\left(  -n\right)  _{k}\left(  -m\right)  _{k}}{\left(  c\right)  _{k}%
}\frac{\left(  1-z\right)  ^{k}\zeta^{2k}\left(  z-\zeta\right)  ^{n+m-2k}%
}{k!}.\nonumber
\end{align}
Replacing $\zeta$ by $z$ in (\ref{hyper1}), we obtain%
\begin{align*}
&
{\displaystyle\sum\limits_{x=0}^{\infty}}
\ _{2}F_{1}\left[  \left.
\begin{array}
[c]{c}%
-n,-x\\
c
\end{array}
\right\vert z\right]  ~_{2}F_{1}\left[  \left.
\begin{array}
[c]{c}%
-m,-x\\
c
\end{array}
\right\vert z\right]  \rho\left(  x\right) \\
&  =\left(  1-\frac{1}{z}\right)  ^{c}\ \frac{\left[  \left(  -n\right)
_{n}\right]  ^{2}}{\left(  c\right)  _{n}}\frac{\left(  1-z\right)  ^{n}}%
{n!}\delta_{n,m}%
\end{align*}
and the result follows.
\end{proof}

\begin{corollary}
Let
\[
P_{n}(x)=~_{2}F_{0}\left[  \left.
\begin{array}
[c]{c}%
-n,-x\\
-
\end{array}
\right\vert z\left(  \theta\right)  \right]  ,\quad n=0,1,\ldots.
\]

Then,

\begin{enumerate}
\item
\begin{equation}
P_{n+1}\left(  x\right)  +\left[  \left(  x-n\right)  z-1\right]  P_{n}\left(
x\right)  -nzP_{n-1}\left(  x\right)  =0. \label{recu0}%
\end{equation}

\item
\begin{equation}
\frac{\partial P_{n}}{\partial\theta}=n\frac{z^{\prime}}{z}\left[
P_{n}\left(  x\right)  -P_{n-1}\left(  x\right)  \right]  ,\quad n=0,1,\ldots.
\label{diff0}%
\end{equation}

\item We have%
\begin{equation}
\rho\left(  x\right)  =\frac{\left[  -z\left(  \theta\right)  \right]  ^{-x}%
}{x!} \label{rho0}%
\end{equation}
and%
\begin{equation}
h_{n}=\left[  -z\left(  \theta\right)  \right]  ^{n}n!\exp\left[  -\frac
{1}{z\left(  \theta\right)  }\right]  ,\quad n=0,1,\ldots, \label{Kn0}%
\end{equation}
where $\rho\left(  x\right)  $ and $h_{n}$ were defined in (\ref{ortho}).
\end{enumerate}
\end{corollary}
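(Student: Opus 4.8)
The plan is to obtain the Corollary as a confluent limit of the Theorem. The family $_2F_0\left[\begin{smallmatrix}-n,-x\\-\end{smallmatrix}\bigm| z\right]$ arises from $_2F_1\left[\begin{smallmatrix}-n,-x\\c\end{smallmatrix}\bigm| w\right]$ by the substitution $w=z/c$ and letting $c\to\infty$, since $\frac{(-n)_k(-x)_k}{(c)_k}\frac{(z/c)^k}{k!}\to\frac{(-n)_k(-x)_k}{k!}z^k$ termwise (and all sums here are finite in $k$, so the limit is immediate with no convergence issues). So the strategy for each of the three items is: write down the corresponding statement from the Theorem with $z$ replaced by $z/c$, then send $c\to\infty$.

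For item 1, I would take the recurrence \eqref{recu} with $z\mapsto z/c$:
\[
(n+c)P_{n+1}(x)+\left[(n-x)\tfrac{z}{c}-2n-c\right]P_n(x)-n\left(\tfrac{z}{c}-1\right)P_{n-1}(x)=0.
\]
Dividing by $c$ and letting $c\to\infty$ kills the $\tfrac{z}{c}$ terms and the $\tfrac{2n}{c}$ term, leaving $P_{n+1}(x)-P_n(x)+nP_{n-1}(x)=0$ — but that is not \eqref{recu0}. The issue is that the limiting polynomials are $\widetilde P_n(x)=\lim_{c\to\infty}P_n(x;z/c)$, and one must be a little more careful: keeping the next order in $1/c$, write $(n+c)P_{n+1}+[(n-x)\tfrac zc-2n-c]P_n-n(\tfrac zc-1)P_{n-1}=0$ and expand. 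Actually the cleanest route for item 1 is to mimic the proof of \eqref{recu} directly: \eqref{recu0} should follow from the analogous contiguous relation for $_2F_0$, or equivalently from the known three-term recurrence for the Charlier polynomials, of which this $_2F_0$ is a rescaling. I would state: "the recurrence \eqref{recu0} follows from the contiguous relation for $_2F_0$ obtained by the same confluence $c\to\infty$, $z\mapsto z/c$ applied to \cite[2.8 (31)]{MR698779}," and then verify by direct substitution of the series that $P_{n+1}(x)+[(x-n)z-1]P_n(x)-nzP_{n-1}(x)=0$ holds coefficient-by-coefficient in powers of $x$ (or in $z$).

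For item 2, the differentiation formula \eqref{diff0} is literally identical in form to \eqref{diff}, and this is the easy part: it follows from exactly the same identity $z\frac{dF}{dz}=a[F(a+1)-F(a)]$ applied with $a=-n$, $F=\,_2F_0\left[\begin{smallmatrix}-n,-x\\-\end{smallmatrix}\bigm| z\right]$, since that Euler-type identity holds for $_pF_q$ regardless of how many lower parameters there are; alternatively it is the $c\to\infty$ limit of \eqref{diff}, where the factor $z'/z$ is scale-invariant under $z\mapsto z/c$. For item 3, I would take $\rho(x)=\frac{(c)_x}{(1-z/c)^x x!}$ from \eqref{rho} and let $c\to\infty$: since $(c)_x\sim c^x$ and $(1-z/c)^x\to 1$, this naively gives $\frac{c^x}{x!}$ which diverges, so again a rescaling is needed — the right statement is $\rho(x)=\lim\frac{(c)_x}{c^x}(1-z/c)^{-x}(\text{something})$; more transparently, I would just verify \eqref{rho0} and \eqref{Kn0} directly by substituting them into the orthogonality sum $\sum_{x=0}^\infty P_n(x)P_m(x)\rho(x)$ and invoking the $c\to\infty$ limit of the bilinear generating-function identity \cite[2.5.2 (12)]{MR698779} used in the Theorem's proof; that identity, under $\lambda=-c$, $b=-n$, $\beta=-m$, $z\mapsto z/c$, $\zeta\mapsto\zeta/c$, $s\mapsto$ (appropriate scaling), degenerates to a bilinear formula for $_2F_0$'s whose diagonal ($\zeta=z$) value gives $h_n=(-z)^n n!\,e^{-1/z}$ directly, the $e^{-1/z}$ arising as $\lim_{c\to\infty}(1-1/c\cdot 1/z)^c$... wait, from $(1-1/z)^c$ in \eqref{Kn} with $z\mapsto z/c$ we get $(1-c/z)^{?}$ — so the confluence must instead scale $z$ the other way. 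The main obstacle, then, is pinning down the exact rescaling of the parameters (and of $s$, $\zeta$) in the confluence $c\to\infty$ so that \eqref{rho}, \eqref{Kn} pass cleanly to \eqref{rho0}, \eqref{Kn0}; once that bookkeeping is fixed, everything else is routine, and item 2 is immediate. I would therefore present the proof as: "(1) and (3) follow from the Theorem by the confluent limit $c\to\infty$ after the substitution $z\mapsto$ [the correct rescaling], using that all series are polynomials in the summation index; (2) is immediate from $z\,dF/dz=a[F(a+1)-F(a)]$ with $a=-n$," and then carry out the short direct verification of \eqref{recu0}, \eqref{rho0}, \eqref{Kn0} by substitution to confirm the limiting expressions are correct.
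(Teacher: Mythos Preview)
Your overall strategy---deduce everything from the Theorem by a confluent limit $c\to\infty$---is exactly what the paper does, and item~2 is indeed immediate from $z\,dF/dz=a[F(a+1)-F(a)]$. But your scaling is backwards, and that is why items~1 and~3 refuse to come out. You write
\[
\frac{(-n)_k(-x)_k}{(c)_k}\,\frac{(z/c)^k}{k!}\longrightarrow\frac{(-n)_k(-x)_k}{k!}\,z^k,
\]
which is false: with $w=z/c$ the general term is $\sim z^k/c^{2k}$ and goes to $0$. The correct confluence is $w=cz$ (equivalently, the paper's $c\mapsto\lambda c$, $z\mapsto\lambda c\,z$, $\lambda\to\infty$), since $(c)_k\sim c^k$ cancels the $c^k$ in the numerator. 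You even spotted this yourself at the end (``the confluence must instead scale $z$ the other way'') but did not act on it.

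Once you use $z\mapsto cz$, everything is clean and there is no need for any direct verification: dividing \eqref{recu} by $c$ after the substitution gives \eqref{recu0} in the limit; \eqref{rho} becomes $\dfrac{(c)_x}{(1-cz)^x x!}\sim\dfrac{c^x}{(-cz)^x x!}=\dfrac{(-z)^{-x}}{x!}$, which is \eqref{rho0}; and in \eqref{Kn} you get $(1-\tfrac{1}{cz})^{c}\to e^{-1/z}$ together with $\dfrac{(1-cz)^n}{(c)_n}\to(-z)^n$, which is \eqref{Kn0}. So the ``bookkeeping'' you flagged as the main obstacle is in fact a one-line fix.
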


\begin{proof}
The results follow immediately using the limit relation \cite[(0.4.5)]%
{koekoek94askeyscheme}%

\begin{equation}
~_{2}F_{0}\left[  \left.
\begin{array}
[c]{c}%
-n,-x\\
-
\end{array}
\right\vert z\left(  \theta\right)  \right]  =~\underset{\lambda
\rightarrow\infty}{\lim}\ _{2}F_{1}\left[  \left.
\begin{array}
[c]{c}%
-n,-x\\
\lambda c
\end{array}
\right\vert \lambda cz\left(  \theta\right)  \right]  . \label{limit}%
\end{equation}

\end{proof}

\section{Main results \label{S3}}

We shall now use that results of the previous section and compute the Fisher
information of the Meixner-Pollaczek, Meixner, Krawtchouk and Charlier polynomials.

\begin{theorem}
\label{Th1}Let
\[
P_{n}(x)=~_{2}F_{1}\left[  \left.
\begin{array}
[c]{c}%
-n,-x\\
c
\end{array}
\right\vert z\left(  \theta\right)  \right]  ,\quad n=0,1,\ldots.
\]
Then,%
\begin{equation}
I_{\theta}\left(  P_{n}\right)  =\left(  \frac{z^{\prime}}{z}\right)
^{2}\left(  1-z\right)  ^{-1}\left[  2n^{2}+\left(  2n+1\right)  c\right]
,\quad n=0,1,\ldots.\label{In1}%
\end{equation}

\end{theorem}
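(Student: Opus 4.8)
The plan is to substitute the differentiation formula \eqref{diff} into the definition \eqref{In}. Using \eqref{rhon} we have $\rho_n = P_n^2\rho/h_n$, so that
\[
\frac{\partial\rho_n}{\partial\theta}=\frac{\rho}{h_n}\left[2P_n\frac{\partial P_n}{\partial\theta}+P_n^2\frac{\rho'}{\rho}-P_n^2\frac{h_n'}{h_n}\right],
\]
where the $\theta$-derivatives of $\rho$ and $h_n$ come from \eqref{rho} and \eqref{Kn} and involve only $z'/z$ and $z'/(1-z)$. Dividing by $\rho_n=P_n^2\rho/h_n$ and squaring, I would obtain
\[
\frac{1}{\rho_n}\left(\frac{\partial\rho_n}{\partial\theta}\right)^2=\rho_n\left[\frac{2}{P_n}\frac{\partial P_n}{\partial\theta}+\frac{\rho'}{\rho}-\frac{h_n'}{h_n}\right]^2 .
\]
Now I would use \eqref{diff} to replace $\frac{1}{P_n}\frac{\partial P_n}{\partial\theta}$ by $n\frac{z'}{z}\bigl(1-\frac{P_{n-1}}{P_n}\bigr)$; after factoring out $(z'/z)^2$, the bracket becomes a rational function of $x$ and of the ratio $P_{n-1}(x)/P_n(x)$ whose square, weighted by $\rho_n$, must be summed over $x$.

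The key simplification I expect is that everything reduces to computing expectations of low-degree polynomials in $x$ and of cross-terms $P_n P_{n-1}$ against the weight $\rho$. Explicitly, expanding the square produces terms of the form $\sum_x P_n^2\rho\cdot(\text{poly in }x)$, $\sum_x P_nP_{n-1}\rho\cdot(\text{poly in }x)$, and $\sum_x P_{n-1}^2\rho\cdot(\text{poly in }x)$, all divided by $h_n$. The pure-$P_n^2$ pieces with constant or linear weight in $x$ are handled by orthogonality and by known moments of $\rho$; the mixed $P_nP_{n-1}$ pieces are evaluated using the three-term recurrence \eqref{recu}, which lets me write $xP_n$ as a combination of $P_{n+1},P_n,P_{n-1}$ and thereby reduce $\sum_x xP_nP_{n-1}\rho$ to $h_n$ times an explicit rational factor; the $P_{n-1}^2$ pieces with a quadratic weight in $x$ similarly collapse via \eqref{recu} applied twice, together with $h_{n-1}/h_n=\dfrac{(1-z)\,(c+n-1)}{n}$ from \eqref{Kn}. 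A cleaner route, which I would actually follow, is to first rewrite $\dfrac{\partial\rho_n}{\partial\theta}\big/\rho_n=\dfrac{\partial}{\partial\theta}\ln\rho_n$ and note $I_\theta(P_n)=\mathbb{E}_{\rho_n}\!\big[(\partial_\theta\ln\rho_n)^2\big]$; since $\sum_x\rho_n=1$ identically in $\theta$ by \eqref{rhon1}, we have $\mathbb{E}_{\rho_n}[\partial_\theta\ln\rho_n]=0$, so $I_\theta(P_n)=\operatorname{Var}_{\rho_n}(\partial_\theta\ln\rho_n)$; this kills the constant part of the bracket and reduces the work to a variance computation.

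Carrying this out, $\partial_\theta\ln\rho_n=\dfrac{z'}{z}\Bigl[\,2n\bigl(1-\tfrac{P_{n-1}}{P_n}\bigr)+ x\cdot\tfrac{z}{1-z}\cdot(\text{const})+(\text{const})\Bigr]$ up to sign conventions, where the $x$-term comes from $\partial_\theta\ln\rho = \dfrac{-x z'}{z-1}$. After centering, the relevant random variable is an affine combination of $x$ and $P_{n-1}(x)/P_n(x)$; its variance under $\rho_n$ then requires $\operatorname{Var}_{\rho_n}(x)$, $\operatorname{Cov}_{\rho_n}\!\big(x,\,P_{n-1}/P_n\big)$, and $\operatorname{Var}_{\rho_n}\!\big(P_{n-1}/P_n\big)$. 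The first is a standard second-moment computation for the weight $\rho$ (essentially the variance of a negative-binomial-type law, cf. \eqref{negbin}). The last two are where the three-term recurrence does the real work: multiplying \eqref{recu} by $P_{n-1}\rho/h_n$ and summing gives $\mathbb{E}_{\rho_n}[P_{n-1}/P_n\cdot(\text{stuff})]$ in closed form, and squaring or iterating handles the remaining second moment.

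The main obstacle will be the bookkeeping in evaluating $\operatorname{Var}_{\rho_n}\!\big(P_{n-1}/P_n\big)$ and the cross term: one cannot simply expand $P_{n-1}/P_n$, so one must repeatedly feed the recurrence \eqref{recu} back in, keeping track of the factors $(n+c)$, $(z-1)$, $2n+c$, and of the ratios $h_{n\pm1}/h_n$, until the $(1-z)^{-1}[2n^2+(2n+1)c]$ pattern emerges. Telescoping and the explicit form of $h_n$ in \eqref{Kn} should make all the parameter-dependent factors cancel down to exactly that expression; verifying the small cases $n=0$ (giving $I_\theta(P_0)=c\,(z'/z)^2(1-z)^{-1}$, consistent with \eqref{negbin} since $P_0\equiv1$ and $\rho_0=\rho/h_0$) and $n=1$ provides a useful check on the algebra.
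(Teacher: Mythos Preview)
Your plan is workable and the ingredients are right, but you are missing the one step that makes the paper's argument short. The paper applies the recurrence \eqref{recu} \emph{before} squaring, not after: once you have
\[
\partial_\theta \rho_n \;\propto\; P_n(x)\bigl[(c+xz-nz+2n)P_n(x)+2n(z-1)P_{n-1}(x)\bigr]\rho(x),
\]
the bracket is recognised, via \eqref{recu}, as exactly $(n+c)P_{n+1}(x)+n(z-1)P_{n-1}(x)$. The variable $x$ disappears completely. Then $[\partial_\theta\rho_n]^2/\rho_n$ is a constant times $\rho(x)\bigl[(n+c)P_{n+1}+n(z-1)P_{n-1}\bigr]^2$; expanding the square yields only $P_{n+1}^2\rho$, $P_{n-1}^2\rho$ and the cross term $P_{n+1}P_{n-1}\rho$. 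The first two are multiples of $\rho_{n+1}$ and $\rho_{n-1}$ and sum to $1$ by \eqref{rhon1}; the cross term sums to zero by orthogonality. No variances, no covariances, no moments of $x$ under $\rho_n$ are needed.

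By contrast, your route keeps $x$ inside the square, forcing you to compute $\operatorname{Var}_{\rho_n}(x)$, $\operatorname{Cov}_{\rho_n}(x,P_{n-1}/P_n)$ and $\operatorname{Var}_{\rho_n}(P_{n-1}/P_n)$ separately and then recombine them. Each of these is indeed computable exactly as you outline (e.g.\ $\mathbb{E}_{\rho_n}[(P_{n-1}/P_n)^2]=h_{n-1}/h_n$, and $\sum_x xP_nP_{n-1}\rho$ via one application of \eqref{recu}), so the approach is not wrong; it just postpones the use of the recurrence to a point where three separate computations are required instead of one. The ``bookkeeping obstacle'' you anticipate evaporates if you use \eqref{recu} one step earlier to trade $xP_n$ for $P_{n+1},P_n,P_{n-1}$ inside $\partial_\theta\rho_n$ itself.
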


\begin{proof}
From (\ref{rhon}), (\ref{rho}) and (\ref{Kn}), we have%
\begin{equation}
\rho_{n}(x)=\left(  1-\frac{1}{z}\right)  ^{-c}\left[  P_{n}(x)\right]
^{2}\frac{\left(  c\right)  _{x}\left(  c\right)  _{n}}{\left(  1-z\right)
^{x+n}x!n!}. \label{rhonnew}%
\end{equation}
Hence,%
\begin{equation}
\frac{\partial}{\partial\theta}\rho_{n}\left(  x\right)  =-\left(  \frac
{z}{z-1}\right)  ^{c-1}\frac{\left(  c\right)  _{x}\left(  c\right)  _{n}%
}{\left(  1-z\right)  ^{x+n}x!n!}P_{n}(x)\left[  \left(  c+nz+xz\right)
z^{\prime}P_{n}(x)+2z(1-z)\frac{\partial P_{n}}{\partial\theta}\right]  .
\label{rho1}%
\end{equation}
Using (\ref{diff}) and (\ref{recu}) in (\ref{rho1}), we obtain%
\begin{align}
\frac{\partial}{\partial\theta}\rho_{n}\left(  x\right)   &  =-\left(
\frac{z}{z-1}\right)  ^{c-1}\frac{\left(  c\right)  _{x}\left(  c\right)
_{n}}{\left(  1-z\right)  ^{x+n}x!n!}z^{\prime}P_{n}(x)\left[  \left(
c-nz+xz+2n\right)  P_{n}(x)+2n(z-1)P_{n-1}\left(  x\right)  \right]
\label{rho2}\\
&  =-\left(  \frac{z}{z-1}\right)  ^{c-1}\frac{\left(  c\right)  _{x}\left(
c\right)  _{n}}{\left(  1-z\right)  ^{x+n}x!n!}z^{\prime}P_{n}(x)\left[
\left(  n+c\right)  P_{n+1}(x)+n(z-1)P_{n-1}\left(  x\right)  \right]
.\nonumber
\end{align}
Therefore,%
\begin{align}
\left[  \frac{\partial}{\partial\theta}\rho_{n}\left(  x\right)  \right]
^{2}\frac{1}{\rho_{n}\left(  x\right)  }  &  =\left(  \frac{z}{z-1}\right)
^{c-2}\frac{\left(  c\right)  _{x}\left(  c\right)  _{n}}{\left(  1-z\right)
^{x+n}x!n!}\left(  z^{\prime}\right)  ^{2}\left[  \left(  n+c\right)
P_{n+1}(x)+n(z-1)P_{n-1}\left(  x\right)  \right]  ^{2}\nonumber\\
&  =\left(  1-z\right)  ^{-1}\left(  \frac{z^{\prime}}{z}\right)  ^{2}\left[
\left(  n+1\right)  \left(  c+n\right)  \rho_{n+1}(x)+n\left(  c+n-1\right)
\rho_{n-1}(x)\right] \label{rho3}\\
&  +2\left(  \frac{z}{z-1}\right)  ^{c-2}\frac{\left(  c\right)  _{n}}{\left(
1-z\right)  ^{n}n!}\left(  z^{\prime}\right)  ^{2}n\left(  n+c\right)
(z-1)P_{n+1}(x)P_{n-1}\left(  x\right)  \rho\left(  x\right)  ,\nonumber
\end{align}
where we have used (\ref{rho}) and (\ref{rhonnew}).

Summing (\ref{rho3}), while taking (\ref{ortho}), (\ref{In}) and (\ref{rhon1})
into account, the result follows.
\end{proof}

\begin{corollary}
\label{coro1}Let
\[
P_{n}(x)=~_{2}F_{0}\left[  \left.
\begin{array}
[c]{c}%
-n,-x\\
-
\end{array}
\right\vert z\left(  \theta\right)  \right]  ,\quad n=0,1,\ldots.
\]
Then,%
\[
I_{\theta}\left(  P_{n}\right)  =-\left(  \frac{z^{\prime}}{z}\right)
^{2}\frac{\left(  2n+1\right)  }{z},\quad n=0,1,\ldots.
\]

\end{corollary}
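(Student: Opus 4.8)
The plan is to derive Corollary \ref{coro1} as the $\lambda\to\infty$ limit of Theorem \ref{Th1}, using the confluence (\ref{limit}) from the proof of the preceding corollary. That relation exhibits the ${}_{2}F_{0}$ family as the limit of the ${}_{2}F_{1}$ family of Theorem \ref{Th1} after the substitution $c\mapsto\lambda c$, $z(\theta)\mapsto\lambda c\,z(\theta)$; and, as was shown there, under the same substitution and limit the weight (\ref{rho}) and normalization (\ref{Kn}) pass to (\ref{rho0}) and (\ref{Kn0}). Hence the densities $\rho_{n}$, and with them $I_{\theta}(P_{n})$, should transform accordingly, and it suffices to take this limit in the closed form (\ref{In1}).

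Carrying this out, I would substitute $c\mapsto\lambda c$ and $z\mapsto\lambda c\,z$ on the right-hand side of (\ref{In1}). The key observation is that $z^{\prime}/z$ is invariant under $z\mapsto\lambda c\,z$ (the factor $\lambda c$ cancels between numerator and denominator), so that term survives unchanged; what is left is the elementary limit
\[
\lim_{\lambda\to\infty}(1-\lambda c\,z)^{-1}\left[2n^{2}+(2n+1)\lambda c\right]=-\frac{2n+1}{z},
\]
since the terms carrying $\lambda c$ dominate and the $2n^{2}$ term is $O(\lambda^{-1})$. Combining, the right-hand side of (\ref{In1}) tends to $-\left(z^{\prime}/z\right)^{2}(2n+1)/z$, which is the claimed formula.

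The only delicate point is the interchange of the limit $\lambda\to\infty$ with the sum over $x$ in the definition (\ref{In}) of $I_{\theta}(P_{n})$; this is the same formal step already used (without detailed justification) in the proof of the preceding corollary, and for fixed $n$ it is harmless since $P_{n}$ has fixed degree in $x$. As an independent check one could instead avoid the limit altogether and rerun the proof of Theorem \ref{Th1} verbatim with (\ref{recu}), (\ref{diff}), (\ref{rho}), (\ref{Kn}) replaced by their ${}_{2}F_{0}$ counterparts (\ref{recu0}), (\ref{diff0}), (\ref{rho0}), (\ref{Kn0}): the algebra has the same structure, with the telescoping of $\rho_{n\pm1}$ and the vanishing of the cross term $\sum_{x}P_{n+1}(x)P_{n-1}(x)\rho(x)$ by orthogonality again doing the work, and this confirms the coefficient $-(2n+1)/z$ directly.
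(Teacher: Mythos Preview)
Your proposal is correct and matches the paper's own proof: the paper likewise offers both routes, first remarking that one can rerun the argument of Theorem~\ref{Th1} using (\ref{recu0})--(\ref{Kn0}), and then observing that the result follows directly from the limit (\ref{limit}) applied to (\ref{In1}), computing exactly the same limit you do. The only difference is emphasis---the paper states the rerun as the primary proof and the limit as the alternative, while you do the reverse.
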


\begin{proof}
The result follows from (\ref{recu0})-(\ref{Kn0}) and the same steps used in
the proof of Theorem \ref{Th1}. It can also be proved directly by using
(\ref{limit}) in (\ref{In1}), since%
\[
\underset{\lambda\rightarrow\infty}{\lim}\left(  \frac{\lambda cz^{\prime}%
}{\lambda cz}\right)  ^{2}\left(  1-\lambda cz\right)  ^{-1}\left[
2n^{2}+\left(  2n+1\right)  \lambda c\right]  =-\left(  \frac{z^{\prime}}%
{z}\right)  ^{2}\frac{\left(  2n+1\right)  }{z}.
\]
We have now all the elements to state our main result.
\end{proof}

\begin{theorem}
The Fisher information of the Meixner, Krawtchouk and Charlier polynomials is
given by:

\begin{enumerate}
\item Meixner%
\[
I_{c}\left(  M_{n}\right)  =\frac{2n^{2}+(2n+1)\beta}{c\left(  c-1\right)
^{2}},\quad n=0,1,\ldots.
\]

\item Krawtchouk%
\[
I_{p}\left(  K_{n}\right)  =\frac{2n^{2}-(2n+1)N}{p\left(  p-1\right)  },\quad
n=0,1,\ldots,N.
\]

\item Charlier%
\[
I_{a}\left(  C_{n}\right)  =\frac{2n+1}{a},\quad n=0,1,\ldots.
\]

\end{enumerate}
\end{theorem}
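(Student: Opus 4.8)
The plan is to recognize that the Meixner, Krawtchouk and Charlier polynomials are, up to renaming of parameters, exactly the families already treated in Theorem~\ref{Th1} and Corollary~\ref{coro1}, so that all three formulas drop out by substitution. Concretely I would invoke the standard hypergeometric representations \cite{koekoek94askeyscheme}
\[
M_n(x;\beta,c)={}_2F_1\!\left[\left.\begin{array}{c}-n,-x\\ \beta\end{array}\right|1-\tfrac{1}{c}\right],\qquad
K_n(x;p,N)={}_2F_1\!\left[\left.\begin{array}{c}-n,-x\\ -N\end{array}\right|\tfrac{1}{p}\right],
\]
\[
C_n(x;a)={}_2F_0\!\left[\left.\begin{array}{c}-n,-x\\ -\end{array}\right|-\tfrac{1}{a}\right],
\]
valid for $\beta>0$, $0<c<1$, $0<p<1$, $a>0$ (and $n=0,\dots,N$ for Krawtchouk). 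In each case I read off the differentiation variable $\theta$ and the function $z(\theta)$: for Meixner, $\theta=c$, the role of the lower parameter ``$c$'' of Theorem~\ref{Th1} is played by $\beta$, and $z=1-c^{-1}$; for Krawtchouk, $\theta=p$, the lower parameter is $-N$, and $z=p^{-1}$; for Charlier, $\theta=a$ and $z=-a^{-1}$. (The first case involves a mild notational clash, since the symbol $c$ in \eqref{In1} there denotes $\beta$.)

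Then I would carry out the elementary substitutions. For Meixner, $z=\tfrac{c-1}{c}$ gives $z'=c^{-2}$, hence $\tfrac{z'}{z}=\tfrac{1}{c(c-1)}$ and $(1-z)^{-1}=c$; inserting these into \eqref{In1} with lower parameter $\beta$ yields $I_c(M_n)=\dfrac{2n^2+(2n+1)\beta}{c(c-1)^2}$. For Krawtchouk, $z=p^{-1}$ gives $z'=-p^{-2}$, hence $\tfrac{z'}{z}=-p^{-1}$ and $(1-z)^{-1}=\tfrac{p}{p-1}$; inserting these into \eqref{In1} with lower parameter $-N$ yields $I_p(K_n)=\dfrac{2n^2-(2n+1)N}{p(p-1)}$. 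For Charlier, $z=-a^{-1}$ gives $z'=a^{-2}$, hence $\tfrac{z'}{z}=-a^{-1}$ and $-z^{-1}=a$; Corollary~\ref{coro1} then gives $I_a(C_n)=\dfrac{2n+1}{a}$.

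I do not expect a genuine obstacle here; the one point requiring a line of care is the legitimacy of the specializations. In the Krawtchouk case the lower parameter $-N$ is a negative integer, so $(c)_x=(-N)_x$ vanishes for $x>N$ and the weight \eqref{rho} is supported on $\{0,1,\dots,N\}$; one should note that the proof of Theorem~\ref{Th1} — the three–term recurrence \eqref{recu}, the differentiation formula \eqref{diff}, and the telescoping leading to \eqref{rho3} — is performed term by term and is therefore unaffected when the infinite sum is replaced by this finite one. One should likewise check that the stated parameter ranges make $\rho(x)$ positive and $h_n$ finite, so that $\rho_n$ is a genuine probability mass function and \eqref{In} is well defined (in particular the Krawtchouk value is indeed nonnegative, since $2n^2-(2n+1)N\le -n\le 0$ and $p(p-1)<0$). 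Granting this, the theorem is immediate, and the write–up should be only a few lines.
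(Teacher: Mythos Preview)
Your proposal is correct and follows exactly the paper's approach: invoke the hypergeometric representations of the Meixner, Krawtchouk and Charlier polynomials and substitute into Theorem~\ref{Th1} and Corollary~\ref{coro1}. Your write-up is in fact more detailed than the paper's, which simply lists the representations and declares the result, leaving the elementary substitutions and the remarks about the Krawtchouk specialization implicit.
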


\begin{proof}
The result follows from Theorem \ref{Th1}, Corollary \ref{coro1} and the
hypergeometric representations \cite{koekoek94askeyscheme}%
\begin{align*}
M_{n}(x;\beta,c)  &  =~_{2}F_{1}\left[  \left.
\begin{array}
[c]{c}%
-n,-x\\
\beta
\end{array}
\right\vert 1-\frac{1}{c}\right]  ,\quad\beta>0,\quad0<c<1,\\
K_{n}(x;p,N)  &  =~_{2}F_{1}\left[  \left.
\begin{array}
[c]{c}%
-n,-x\\
-N
\end{array}
\right\vert \frac{1}{p}\right]  ,\quad0<p<1,\quad N=0,1,\ldots,\\
C_{n}(x;a)  &  =~_{2}F_{0}\left[  \left.
\begin{array}
[c]{c}%
-n,-x\\
-
\end{array}
\right\vert -\frac{1}{a}\right]  ,\quad a>0.
\end{align*}

\end{proof}

\begin{remark}
When $n=0,$ we recover the Fisher information of the negative binomial
(\ref{negbin}), binomial (\ref{bin}) and Poisson (\ref{poisson}) distributions.
\end{remark}

We now compute the Fisher information of the Meixner-Pollaczek polynomials
using some of the results obtained in the discrete case.

\begin{theorem}
The Fisher information of the Meixner-Pollaczek polynomials is given by:%
\[
I_{\phi}\left(  P_{n}^{\left(  \lambda\right)  }\right)  =%
{\displaystyle\int\limits_{-\infty}^{\infty}}
\left[  \frac{\partial}{\partial\theta}\rho_{n}\left(  x\right)  \right]
^{2}\frac{1}{\rho_{n}\left(  x\right)  }dx=\frac{2\left[  n^{2}+\left(
2n+1\right)  \lambda\right]  }{\sin^{2}\left(  \phi\right)  },\quad
n=0,1,\ldots,
\]
with $\rho_{n}\left(  x\right)  $ defined as in (\ref{rhon}).
\end{theorem}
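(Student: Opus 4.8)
The plan is to run the argument of Theorem~\ref{Th1} once more, now for a continuous (and complex–continued) member of the same family. Recall that the Meixner–Pollaczek polynomials have the hypergeometric representation
\[
P_{n}^{(\lambda)}(x;\phi)=\frac{(2\lambda)_{n}}{n!}\,e^{in\phi}\,{}_{2}F_{1}\left[\left.
\begin{array}{c}
-n,\lambda+ix\\
2\lambda
\end{array}
\right\vert 1-e^{-2i\phi}\right]
\]
and are orthogonal on $(-\infty,\infty)$ with weight $\rho(x)=\frac{1}{2\pi}e^{(2\phi-\pi)x}\left\vert\Gamma(\lambda+ix)\right\vert^{2}$ and norms $h_{n}=\Gamma(n+2\lambda)\left[(2\sin\phi)^{2\lambda}n!\right]^{-1}$. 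The factor $\frac{(2\lambda)_{n}}{n!}e^{in\phi}$ is independent of $x$, so it cancels in the density $\rho_{n}$ of (\ref{rhon}); thus, as far as $\rho_{n}$ is concerned, $P_{n}^{(\lambda)}$ is the polynomial of Theorem~\ref{Th1} with $c=2\lambda$, $z=z(\phi)=1-e^{-2i\phi}$ and the entry $-x$ replaced by $\lambda+ix$. Here $z'/z=e^{-i\phi}/\sin\phi$ and $1-z=e^{-2i\phi}$, so $\left(z'/z\right)^{2}(1-z)^{-1}=\csc^{2}\phi$.

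Since (\ref{recu}) and (\ref{diff}) are contiguous relations valid for arbitrary parameters, they hold for ${}_{2}F_{1}[-n,\lambda+ix;2\lambda\,|\,z]$; multiplying by the normalization $\frac{(2\lambda)_{n}}{n!}e^{in\phi}$ turns them into the three–term recurrence and the $\phi$–derivative of $P_{n}^{(\lambda)}$,
\[
(n+1)P_{n+1}^{(\lambda)}-2\left[x\sin\phi+(n+\lambda)\cos\phi\right]P_{n}^{(\lambda)}+(n+2\lambda-1)P_{n-1}^{(\lambda)}=0,
\]
\[
\frac{\partial}{\partial\phi}P_{n}^{(\lambda)}=n\cot\phi\,P_{n}^{(\lambda)}-(n+2\lambda-1)\csc\phi\,P_{n-1}^{(\lambda)}.
\]
Then, exactly as in the passage (\ref{rho1})--(\ref{rho3}), I would write $\frac{\partial}{\partial\phi}\ln\rho_{n}=2\frac{\partial}{\partial\phi}\ln P_{n}^{(\lambda)}+\frac{\partial}{\partial\phi}\ln\rho-\frac{\partial}{\partial\phi}\ln h_{n}$, insert the displayed derivative together with $\frac{\partial}{\partial\phi}\ln\rho=2x$ and $\frac{\partial}{\partial\phi}\ln h_{n}=-2\lambda\cot\phi$, and use the recurrence to eliminate the explicit $x$. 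Everything collapses to
\[
\frac{\partial}{\partial\phi}\rho_{n}(x)=\frac{\rho_{n}(x)}{\sin\phi\,P_{n}^{(\lambda)}(x)}\left[(n+1)P_{n+1}^{(\lambda)}(x)-(n+2\lambda-1)P_{n-1}^{(\lambda)}(x)\right].
\]

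Squaring, dividing by $\rho_{n}$, and using $\rho_{n}/\left[P_{n}^{(\lambda)}\right]^{2}=\rho/h_{n}$ (from (\ref{rhon})) expresses $\left[\partial_{\phi}\rho_{n}\right]^{2}/\rho_{n}$ as $\sin^{-2}\phi$ times $(\rho/h_{n})\left[(n+1)P_{n+1}^{(\lambda)}-(n+2\lambda-1)P_{n-1}^{(\lambda)}\right]^{2}$. Integrating over $\mathbb{R}$ and expanding the square, the cross term $\propto\int\rho\,P_{n+1}^{(\lambda)}P_{n-1}^{(\lambda)}$ vanishes by orthogonality, while $\int\rho\left(P_{n\pm1}^{(\lambda)}\right)^{2}=h_{n\pm1}$ with $h_{n+1}/h_{n}=(n+2\lambda)/(n+1)$ and $h_{n-1}/h_{n}=n/(n+2\lambda-1)$. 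Hence
\[
I_{\phi}\left(P_{n}^{(\lambda)}\right)=\frac{(n+1)(n+2\lambda)+n(n+2\lambda-1)}{\sin^{2}\phi}=\frac{2\left[n^{2}+(2n+1)\lambda\right]}{\sin^{2}\phi},
\]
which is precisely (\ref{In1}) evaluated at $c=2\lambda$ and $\left(z'/z\right)^{2}(1-z)^{-1}=\csc^{2}\phi$.

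The only point that needs genuine care rather than bookkeeping is the passage from the discrete sums of Section~\ref{s2} to the continuous integral here: the Meixner–Pollaczek weight is not the naive analytic continuation of (\ref{rho}), so Theorem~\ref{Th1} cannot be quoted verbatim, and one must recompute $\partial_{\phi}\ln\rho$ and $\partial_{\phi}\ln h_{n}$ and check that the same cancellations still take place (they do, as indicated above). One should also note that $\rho(x)$ decays exponentially as $x\to\pm\infty$ for $0<\phi<\pi$, which legitimizes the differentiation under the integral sign used implicitly throughout.
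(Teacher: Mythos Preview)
Your proof is correct and follows essentially the same route as the paper: derive $\partial_{\phi}P_{n}^{(\lambda)}$ from (\ref{diff}), differentiate $\rho_{n}$, use the three-term recurrence (\ref{MPrecu}) to eliminate $x$ and arrive at the key identity $\partial_{\phi}\rho_{n}=\rho_{n}\,[\,(n+1)P_{n+1}^{(\lambda)}-(n+2\lambda-1)P_{n-1}^{(\lambda)}\,]/(\sin\phi\,P_{n}^{(\lambda)})$, then square and integrate using orthogonality and the ratio $h_{n\pm1}/h_{n}$. Your additional remarks---that the answer is formally (\ref{In1}) with $c=2\lambda$ and $(z'/z)^{2}(1-z)^{-1}=\csc^{2}\phi$, and that the exponential decay of $\rho$ justifies differentiating under the integral---are nice observations the paper does not make explicit, but the computational skeleton is the same.
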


\begin{proof}
The Meixner-Pollaczek polynomials have the hypergeometric representation
\cite{koekoek94askeyscheme}%
\begin{equation}
P_{n}^{\left(  \lambda\right)  }(x;\phi)=\frac{\left(  2\lambda\right)  _{n}%
}{n!}e^{\mathrm{i}n\phi}\ ~_{2}F_{1}\left[  \left.
\begin{array}
[c]{c}%
-n,\lambda+\mathrm{i}x\\
2\lambda
\end{array}
\right\vert 1-e^{-2\mathrm{i}\phi}\right]  ,\quad\lambda>0,\quad0<\phi
<\pi.\label{MP1}%
\end{equation}
They satisfy the orthogonality relation%
\begin{equation}
\frac{1}{2\pi}%
{\displaystyle\int\limits_{-\infty}^{\infty}}
e^{\left(  2\phi-\pi\right)  x}\left\vert \Gamma\left(  \lambda+\mathrm{i}%
x\right)  \right\vert ^{2}P_{m}^{\left(  \lambda\right)  }(x;\phi
)P_{n}^{\left(  \lambda\right)  }(x;\phi)dx=\frac{\Gamma\left(  n+2\lambda
\right)  }{\left[  2\sin\left(  \phi\right)  \right]  ^{2\lambda}n!}%
\delta_{n,m},\quad n,m=0,1,\ldots\label{MP2}%
\end{equation}
and the recurrence relation%
\begin{equation}
\left(  n+1\right)  P_{n+1}^{\left(  \lambda\right)  }-2\left[  x\sin\left(
\phi\right)  +\left(  n+\lambda\right)  \cos\left(  \phi\right)  \right]
P_{n}^{\left(  \lambda\right)  }+\left(  n+2\lambda-1\right)  P_{n-1}^{\left(
\lambda\right)  }=0.\label{MPrecu}%
\end{equation}

From (\ref{diff}) and (\ref{MP1}), we have%
\[
\frac{\partial P_{n}^{\left(  \lambda\right)  }}{\partial\phi}=n\cot\left(
\phi\right)  P_{n}^{\left(  \lambda\right)  }-\frac{\left(  n+2\lambda
-1\right)  }{\sin\left(  \phi\right)  }P_{n-1}^{\left(  \lambda\right)  },
\]
while (\ref{rhon}) and (\ref{MP2}) give%
\begin{equation}
\rho_{n}(x)=\frac{e^{\left(  2\phi-\pi\right)  x}\left\vert \Gamma\left(
\lambda+\mathrm{i}x\right)  \right\vert ^{2}\left[  2\sin\left(  \phi\right)
\right]  ^{2\lambda}n!\left[  P_{n}^{\left(  \lambda\right)  }(x;\phi)\right]
^{2}}{2\pi\Gamma\left(  n+2\lambda\right)  }.\label{MP3}%
\end{equation}
Note that%
\begin{equation}%
{\displaystyle\int\limits_{-\infty}^{\infty}}
\rho_{n}(x)dx=1,\quad n=0,1,\ldots.\label{one}%
\end{equation}
Differentiating (\ref{MP3}) with respect to $\phi,$ we obtain%
\[
\frac{\partial\rho_{n}}{\partial\phi}=\frac{2\rho_{n}(x)}{P_{n}^{\left(
\lambda\right)  }}\left\{  \left[  x+\left(  n+\lambda\right)  \cot\left(
\phi\right)  \right]  P_{n}^{\left(  \lambda\right)  }-\frac{\left(
n+2\lambda-1\right)  }{\sin\left(  \phi\right)  }P_{n-1}^{\left(
\lambda\right)  }\right\}
\]
or, using (\ref{MPrecu}),%
\begin{equation}
\frac{\partial\rho_{n}}{\partial\phi}=\frac{\rho_{n}(x)}{\sin\left(
\phi\right)  P_{n}^{\left(  \lambda\right)  }}\left[  \left(  n+1\right)
P_{n+1}^{\left(  \lambda\right)  }-\left(  n+2\lambda-1\right)  P_{n-1}%
^{\left(  \lambda\right)  }\right]  .\label{Mp4}%
\end{equation}
Therefore,%
\begin{gather}
\left[  \frac{\partial}{\partial\theta}\rho_{n}\left(  x\right)  \right]
^{2}\frac{1}{\rho_{n}\left(  x\right)  }=\frac{\rho_{n}\left(  x\right)
}{\sin^{2}\left(  \phi\right)  \left(  P_{n}^{\left(  \lambda\right)
}\right)  ^{2}}\left[  \left(  n+1\right)  ^{2}\left(  P_{n+1}^{\left(
\lambda\right)  }\right)  ^{2}\right.  \nonumber\\
\left.  -2\left(  n+1\right)  \left(  n+2\lambda-1\right)  P_{n+1}^{\left(
\lambda\right)  }P_{n-1}^{\left(  \lambda\right)  }+\left(  n+2\lambda
-1\right)  ^{2}\left(  P_{n-1}^{\left(  \lambda\right)  }\right)  ^{2}\right]
\label{Mp5}\\
=\frac{1}{\sin^{2}\left(  \phi\right)  }\left[  \left(  n+1\right)  \left(
n+2\lambda\right)  \rho_{n+1}\left(  x\right)  +n\left(  n+2\lambda-1\right)
\rho_{n-1}\left(  x\right)  \right.  \nonumber\\
\left.  -2\left(  n+1\right)  \left(  n+2\lambda-1\right)  \frac{\left[
2\sin\left(  \phi\right)  \right]  ^{2\lambda}n!}{\Gamma\left(  n+2\lambda
\right)  }\rho(x)P_{n+1}^{\left(  \lambda\right)  }P_{n-1}^{\left(
\lambda\right)  }\right]  ,\nonumber
\end{gather}
where%
\[
\rho(x)=\frac{e^{\left(  2\phi-\pi\right)  x}\left\vert \Gamma\left(
\lambda+\mathrm{i}x\right)  \right\vert ^{2}}{2\pi}.
\]

Integrating (\ref{Mp5}) and using the orthogonality relation (\ref{MP2}) and
(\ref{one}), we get%
\[%
{\displaystyle\int\limits_{-\infty}^{\infty}}
\left[  \frac{\partial}{\partial\theta}\rho_{n}\left(  x\right)  \right]
^{2}\frac{1}{\rho_{n}\left(  x\right)  }dx=\frac{1}{\sin^{2}\left(
\phi\right)  }\left[  \left(  n+1\right)  \left(  n+2\lambda\right)  +n\left(
n+2\lambda-1\right)  \right]
\]
and the result follows.
\end{proof}

\section{Conclusions and further directions}

We have computed the Fisher information of the Meixner, Krawtchouk and
Charlier polynomials, which can be viewed in a sense as extensions of the
Fisher information of the negative binomial, binomial and Poisson
distributions, respectively. We are working on trying to extend the same
framework to include other discrete orthogonal polynomials, namely the Racah
and Hahn families. 

We have also obtained the Fisher information of the Meixner-Pollaczek
polynomials. It would be very interesting to calculate the Fisher information
of the Wilson and the rest of the Hahn families (continuous, dual and
continuous dual). Finally, the Fisher information of $q$-orthogonal
polynomials doesn't seem to have been considered yet.

\begin{acknowledgement}
This work was conducted while visiting Technische Universit\"{a}t Berlin and
supported in part by a Sofja Kovalevskaja Award from the Humboldt Foundation,
provided by Professor Olga Holtz. We wish to thank Olga for her generous
sponsorship and our colleagues at TU Berlin for their continuous help.
\end{acknowledgement}

\bibliographystyle{abbrv}

\end{document}